\theoremstyle{plain}
\newtheorem{theorem}[subsection]{Theorem}
\newtheorem{lemma}[subsection]{Lemma}
\theoremstyle{remark}
\newtheorem{remark}[subsection]{Remark}
\newcommand{\noproof}{\hfill \qed}
\newcommand{\comp}{\circ}
\newcommand{\defn}{\textbf}
\newcommand{\To}{\Rightarrow}
\newcommand{\K}{\ensuremath{\mathbb{K}}}
\newcommand{\X}{\ensuremath{\mathbb{X}}}
\newcommand{\Z}{\ensuremath{\mathbb{Z}}}
\newcommand{\Ab}{\ensuremath{\mathsf{Ab}}}
\newcommand{\Mod}{\ensuremath{\mathsf{Mod}}}
\newcommand{\ExtS}{\ensuremath{\mathsf{ExtS}}}
\newcommand{\Ext}{\ensuremath{\mathsf{Ext}}}
\newcommand{\Vect}{\ensuremath{\mathsf{Vect}}}
\newcommand{\Set}{\ensuremath{\mathsf{Set}}}
\DeclareMathOperator{\Ker}{Ker}
\newcommand{\LACC}{{\rm (LACC)}}
\begin{document}

\title[A universal Kaluzhnin--Krasner embedding theorem]{A universal Kaluzhnin--Krasner\\ embedding theorem}

\author{B.~S.~Deval}
\author{X.~García-Martínez}
\author{T.~Van der Linden}

\email{bo.deval@uclouvain.be}
\email{xabier.garcia.martinez@uvigo.gal}
\email{tim.vanderlinden@uclouvain.be}

\address[Xabier García-Martínez]{CITMAga \& Universidade de Vigo, Departamento de Ma\-temáticas, Esc.\ Sup.\ de Enx.\ Informática, Campus de Ourense, E--32004 Ourense, Spain}
\address[Bo Shan Deval, Tim Van der Linden]{Institut de Recherche en Mathématique et Physique, Université catholique de Louvain, chemin du cyclotron 2 bte L7.01.02, B--1348 Louvain-la-Neuve, Belgium}
\address[Tim Van der Linden]{Mathematics \& Data Science, Vrije Universiteit Brussel, Pleinlaan 2, B--1050 Brussel, Belgium}

\thanks{The first author's research is supported by a grant of the Fund for Research Training in Industry and Agriculture (FRIA). The second author is supported by Ministerio de Ciencia e Innovación (Spain), with grant number PID2021-127075NA-I00. The third author is a Senior Research Associate of the Fonds de la Recherche Scientifique--FNRS}

\begin{abstract}
	Given two groups $A$ and $B$, the \emph{Kaluzhnin--Krasner universal embedding theorem} states that the wreath product $A\wr B$ acts as a universal receptacle for extensions from $A$ to $B$. For a split extension, this embedding is compatible with the canonical splitting of the wreath product, which is further universal in a precise sense. This result was recently extended to Lie algebras and to cocommutative Hopf algebras.
	
	The aim of the present article is to explore the feasibility of adapting the theorem to other types of algebraic structures. By explaining the underlying unity of the three known cases, our analysis gives necessary and sufficient conditions for this to happen.
	
	From those we may for instance conclude that a version for crossed modules can indeed be attained, while the theorem cannot be adapted to, say, associative algebras, Jordan algebras or Leibniz algebras, when working over an infinite field: we prove that then, amongst non-associative algebras, only Lie algebras admit a universal Kaluzhnin--Krasner embedding theorem.
\end{abstract}

\subjclass[2020]{16B50, 16W25, 17A36, 18C05, 18E13, 20E22}
\keywords{Wreath product; (split) extension; locally algebraically cartesian closed category}

\maketitle


\section{Introduction}
Given two groups $A$ and $B$, the \emph{Kaluzhnin--Krasner universal embedding theorem}~\cite{KKThm} says that the wreath product $A\wr B$ acts as a universal receptacle for any group $G$ viewed as an extension from $A$ to $B$. More precisely,

\begin{theorem}\label{Theorem KK}
	For any group extension $0 \to A \to G \to B \to 0$, the group $G$ can be embedded into the wreath product via a group homomorphism $\phi_G\colon G \to A \wr B$.\noproof
\end{theorem}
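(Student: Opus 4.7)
The plan is to construct the embedding $\phi_G$ by hand from a set-theoretic section of~$\pi$. First, I would choose a map $s\colon B\to G$ with $\pi\comp s = \mathrm{id}_B$ and, by convention, $s(1)=1$; this is possible because $\pi$ is surjective, though $s$ need not be a group homomorphism. Identifying $A$ with $\Ker\pi$, every element $g\in G$ factors uniquely as $g = a\cdot s(\pi(g))$ for some $a\in A$.

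Then I would define $\phi_G\colon G\to A\wr B = A^B\rtimes B$ by setting $\phi_G(g) = (f_g,\pi(g))$, where $f_g\colon B\to A$ is the function
\[
	f_g(x) = s(x)^{-1}\cdot g\cdot s(\pi(g)^{-1}x).
\]
Applying $\pi$ to the right-hand side yields the identity of $B$, so the value really lies in $A$, and $\phi_G$ is at least a well-defined set map.

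The core of the argument is to verify that $\phi_G$ is a group homomorphism. Expanding $f_{g_1g_2}(x)$ and inserting the identity $s(\pi(g_1)^{-1}x)\cdot s(\pi(g_1)^{-1}x)^{-1}$ between $g_1$ and $g_2$ breaks the product as
\[
	f_{g_1g_2}(x) = f_{g_1}(x)\cdot f_{g_2}\bigl(\pi(g_1)^{-1}x\bigr),
\]
which is precisely the first coordinate of $\phi_G(g_1)\phi_G(g_2)$ once one fixes the convention that $B$ acts on $A^B$ by left translation $(b\cdot f)(x)=f(b^{-1}x)$. Injectivity is then immediate: if $\phi_G(g)=(1,1)$, then $\pi(g)=1$ forces $g\in A$, and evaluating $f_g$ at $x=1$ gives $g=1$.

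The main obstacle is not conceptual but notational: one has to be scrupulous about compatible conventions for the semidirect product, for the $B$-action on $A^B$, and for the cocycle determined by $s$. Once these are pinned down, the verification is routine bookkeeping. I would also note that the embedding depends on the choice of $s$ only up to an inner automorphism of $A\wr B$ coming from a constant function $B\to A$, which is the sort of naturality that should later allow the construction to be lifted to a categorical setting.
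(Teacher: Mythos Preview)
Your approach is essentially the one the paper sketches: pick a set-theoretic section and write down the explicit cocycle-style formula, then check the homomorphism identity and injectivity by hand. The only difference is a harmless choice of conventions—the paper uses the right action $h(b)^{b'}=h(bb')$ and the formula $h_g(b)=s(b)\cdot g\cdot s(b\cdot f(g))^{-1}$, whereas you use the left-translation action and the corresponding ``inverse'' cocycle $f_g(x)=s(x)^{-1}\cdot g\cdot s(\pi(g)^{-1}x)$; the two versions are related by the obvious involution on~$B$ and yield isomorphic wreath products.

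One small inaccuracy in your closing remark: if $s'$ is another section, the two embeddings differ by conjugation with the element $(c,1)\in A^{B}\rtimes B$ where $c(b)=s(b)^{-1}s'(b)$, and this $c\colon B\to A$ is in general \emph{not} a constant function. The conjugating element lies in the base $A^{B}$ of the wreath product, but need not come from the diagonal copy of~$A$.
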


Recall that the wreath product $A \wr B$ is the group $\Set(B, A) \rtimes B$, where the group structure on the set of functions $\Set(B, A)$ from $B$ to $A$ is pointwise (given $h$, $h'\colon B\to A$ and $b\in B$ we put $(hh')(b)=h(b)h'(b)$) and the action of $B$ on $\Set(B, A)$ is canonical ($h(b)^{b'}=h(bb')$ for all $b$, $b'\in B$). Fix a set-theoretical section $s\colon B\to G$ of $f\colon G\to B$. The homomorphism $\phi_G\colon G \to A \wr B$ takes any $g\in G$ and sends it to the couple $(h_g, f(g))\in A\wr B$ where $h_g\colon B\to A\colon b\mapsto s(b)\cdot g\cdot s(b\cdot f(g))^{-1}$. It is not hard to check by hand that this is indeed an injection.

This embedding has some convenient properties, making it an important tool in group theory. Let us just mention here that, by definition, the wreath product $A\wr B$ induces a split extension
\begin{equation}\label{EqWreathSE}
	\xymatrix{
		0 \ar[r] & \Set(B, A) \ar[r]_-\kappa & \Set(B, A) \rtimes B \ar@<.5ex>[r]^-\pi & B \ar@<.5ex>[l]^-\sigma \ar[r] & 0\text{,}
	}
\end{equation}
which allows us to see Theorem~\ref{Theorem KK} as part of an embedding of extensions. Any given extension $E=(0 \to A \to G \to B \to 0)$ from $A$ to $B$ embeds into the wreath product (split) extension, via a monomorphism $\phi$ in the category of group extensions as in
\[
	\xymatrix{
	0 \ar[r] & A \ar[d]_-{\phi_A} \ar[r]^-k & G \ar[d]_-{\phi_G} \ar[r]^-f & B \ar@{=}[d]\ar[r] & 0\\
	0 \ar[r] & \Set(B, A) \ar[r]_-\kappa & A\wr B \ar[r]_-\pi & B \ar[r] & 0\text{.}
	}
\]
Here $\phi_A$ is induced by the universal property of the kernel $\kappa$; since the composite $\phi_G\comp k$ is a monomorphism, so is the morphism $\phi_A$. The diagram forms a monomorphism of group extensions, because it is a monomorphism in the category of group extensions over $B$, where monomorphisms are pointwise. This justifies our use of the word ``embedding''.

As we explained above, the morphism $\phi$ depends on the choice of a set-theoretical section $s\colon B\to G$ of the surjection $f$. Whenever this $s$ is a group homomorphism, there is an action of $B$ on $A$ induced by $s$ through the equivalence between split extensions and group actions based on the semidirect product construction. Likewise, the section $\sigma\colon B\to A\wr B$ of the wreath product split extension corresponds to the canonical action of $B$ on $\Set(B,A)$. The morphism $\phi$ is equivariant with respect to these actions: for each $a\in A$ and $b\in B$ we have that $\phi_A(a)^b=\phi_A(a^b)$, because
\begin{align*}
	\phi_A(a)^b(b') & = h_a(b'b) = s(b'b)a s(b'b)^{-1}= s(b')s(b)a s(b)^{-1}s(b')^{-1} \\
	                & =h_{s(b)a s(b)^{-1}}(b')= \phi_A(a^b)(b')\text{,}
\end{align*}
for all $b'\in B$. This is equivalent to saying that $\phi$ is a morphism of \emph{split} extensions ($\phi_G\circ s=\sigma$). The wreath product $A\wr B$ further satisfies a strong type of universality which we will study in detail in this article.

Recently there has been some effort towards extending this embedding to other algebraic settings: \cite{PRS} considers a Kaluzhnin--Krasner embedding theorem for Lie algebras, while \cite{BST} obtains a theorem in the context of cocommutative Hopf algebras. In both cases, the key problem is of course to determine how the wreath product $A\wr B$ should be defined in the given context. The aim of the present article is to expose the underlying unity of these different results, and explain that there is a general recipe for the wreath product---a \emph{universal Kaluzhnin--Krasner embedding theorem}---solving this problem once and for all, for any type of algebraic structure whatsoever, under some precise conditions, which allows us to make predictions about the feasibility of further extending the result to other contexts.

\section{The case of split extensions}
We start by considering a special case of the embedding theorem: its restriction to split extensions, which turns out to be at the heart of the problem, because it gives us a formula for $A\wr B$.

\subsection{Universality of the wreath product}\label{Section Construction R}
Let $B$ be a fixed group. We write $R(A)$ for the wreath product split extension \eqref{EqWreathSE}, omitting the $B$ from the notation because it is assumed fixed. We write $R$ for the functor from groups to split extensions over $B$ that sends a morphism $\gamma\colon A\to C$ to the morphism of split extensions $R(\gamma)\colon R(A)\to R(C)$ determined by
\begin{equation}\label{Eq R Natural}
	\gamma \wr B\colon A\wr B\to C\wr B\colon (h\colon B\to A,b)\mapsto (\gamma\comp h\colon B\to C,b)\text{.}
\end{equation}

Given a split extension
\begin{equation}\label{Eq Split Extension S}
	S=\bigl(\xymatrix{
		0 \ar[r] & A \ar[r]^-k & G \ar@<.5ex>[r]^-f & B \ar@<.5ex>[l]^-s \ar[r] & 0
	}\bigr)
\end{equation}
over~$B$, we shall denote its inclusion into the wreath product split extension $R(A)$ by $\eta_S\coloneq \phi\colon S\to R(A)$. Now we can make the following observation, first in the context of groups. Suppose we have another wreath product $C\wr B$ over~$B$, together with a morphism of split extensions  $\alpha\colon S\to R(C)$ as in
\begin{equation}\label{Eq Diagram Universality}
	\vcenter{\xymatrix{S \ar[r]^-{\eta_S} \ar[rd]_-{\forall\alpha} & R(A) \ar@{-->}[d]^-{R(\overline{\alpha})} & A\ar@{-->}[d]^-{\exists !\overline{\alpha}}\\
	& R(C) & C}}
\end{equation}
then there exists a unique morphism $\overline{\alpha}\colon A\to C$ such that $R(\overline{\alpha})\circ\eta_S=\alpha$. We may take $\overline{\alpha}(a)=K(\alpha)(a)(1)$ where $1$ is the neutral element of the group~$B$ and the morphism \(K(\alpha)\colon A\to \Set(B,C)\) is the component between the kernel objects of the map \(\alpha \colon S \to R(C)\). Indeed
\begin{align*}
	K(R(\overline\alpha)\comp\eta_S)(a) & = KR(\overline\alpha)\bigl(h_{k(a)}\colon B\to A\colon b\mapsto s(b)k(a)s(b)^{-1}\bigr)         \\
	                                    & = \overline\alpha \comp h_{k(a)}\colon B\to C\colon b\mapsto \overline\alpha(s(b)k(a)s(b)^{-1}) \\
	                                    & = \overline\alpha \comp h_{k(a)}\colon B\to C\colon b\mapsto K(\alpha)(s(b)k(a)s(b)^{-1})(1)    \\
	                                    & = \overline\alpha \comp h_{k(a)}\colon B\to C\colon b\mapsto K(\alpha)(a^{b})(1)                \\
	                                    & = \overline\alpha \comp h_{k(a)}\colon B\to C\colon b\mapsto (K(\alpha)(a))^{b}(1)              \\
	                                    & = \overline\alpha \comp h_{k(a)}\colon B\to C\colon b\mapsto K(\alpha)(a)(1b)                   \\
	                                    & = \overline\alpha \comp h_{k(a)}\colon B\to C\colon b\mapsto K(\alpha)(a)(b)                    \\
	                                    & = K(\alpha)(a)\colon B\to C\colon b\mapsto K(\alpha)(a)(b)\text{;}
\end{align*}
note that this verification on the kernel $A$ of the extension $S$ suffices for the triangle in \eqref{Eq Diagram Universality} to commute. On the other hand, if $K(R(\widetilde\alpha)\comp \eta_S) = K(\alpha)$ for some $\widetilde{\alpha}\colon A\to C$, then the first steps in the above calculation show that $K(\alpha)(a)=\widetilde\alpha\comp h_{k(a)}$ which, when we evaluate in $b=1$, shows that $\widetilde{\alpha}=\overline{\alpha}$ and proves uniqueness of our chosen~$\overline{\alpha}$.

Note that we make this explicit for groups, but it is not hard to see that the same holds for Lie algebras or cocommutative Hopf algebras, each with their respective universal embeddings.

This ``universality of the wreath product'' just means that $\eta_S$ is the $S$-component of the unit of an adjunction $K\dashv R$, where $K$ is the forgetful functor sending a split extension $S$ over $B$ as in \eqref{Eq Split Extension S} to the kernel object $A$, and $R$ is the functor sending an object $A$ to the wreath product split extension over $B$ determined by $A\wr B$ as explained above. Now the only thing missing for a general result unifying the cases of groups, Lie algebras and cocommutative Hopf algebras is a precise description of a context where such an adjunction (1) makes sense and (2) exists.

\subsection{A context for the general analysis}\label{Section Context}
An appropriate context is the setting of \emph{semi-abelian categories} in the sense of Janelidze--Márki--Tholen~\cite{Janelidze-Marki-Tholen}: here the functor $K$ plays a special role, for instance in the study of semidirect products and internal actions~\cite{Bourn-Janelidze:Semidirect}, and the adjunction $K\dashv R$ is already investigated in detail in the literature~\cite{Gray2012}. This context includes the three types of algebraic structures as examples (see~\cite{GKV2} for the Hopf algebra case), and more generally any type of (universal) algebras containing a group operation and where the neutral element of
the group structure forms a subalgebra (the \emph{varieties of $\Omega$-groups} of Higgins~\cite{Higgins}), as well as other interesting but less classical categories such as loops, Heyting semilattices and the dual of the category of pointed sets (see~\cite{Borceux-Bourn, acc,Rodelo:Moore} for an overview).

By definition, a category $\X$ is \emph{semi-abelian} if and only if it is pointed, Barr exact~\cite{Barr} and Bourn protomodular~\cite{Borceux-Bourn,Bourn1991} with binary coproducts. \emph{Pointed} means that there is a \emph{zero object}: an initial object $0$ which is also terminal. Recall that for a category to be \emph{abelian}, one needs to add \emph{additivity} to Barr exactness: the existence of a natural abelian group structure on the hom-sets, which is not available for groups, Lie algebras or Hopf algebras. The protomodularity condition which replaces it says that the \emph{Split Short Five Lemma} holds in $\X$, or equivalently, that the middle object $G$ in a
diagram such as
\[
	\xymatrix{
		A \ar[r]^-k & G \ar@<.5ex>[r]^-f & B \ar@<.5ex>[l]^-s
	}
\]
where $f\comp s=1_B$ and $k$ is a kernel of $f$ is ``covered'' or ``generated'' by the outer objects $A$ and $B$ in the sense that they do not both factor through the same proper subobject of $G$. This implies that $f$ is a cokernel of $k$, so that the diagram forms a split extension.

\subsection{Local algebraic cartesian closedness}
We fix a semi-abelian category $\X$, and write $\ExtS(B)$ for the category of split extensions over $B$ in $\X$: a morphism $\phi=(\phi_A,\phi_G)$ from $S=(A,G,k,f,s)$ such as \eqref{Eq Split Extension S} to $T=(C,H,l,g,t)$ satisfies $\phi_G\circ k=l\circ \phi_A$, $g\circ \phi_G=f$ and $\phi_G\circ s=t$.
\[
	\xymatrix{
	0 \ar[r] & A \ar[d]_-{\phi_A} \ar[r]^-k & G \ar[d]_-{\phi_G} \ar@<.5ex>[r]^-f & B \ar@<.5ex>[l]^-s \ar@{=}[d]\ar[r] & 0\\
	0 \ar[r] & C \ar[r]_-l & H \ar@<.5ex>[r]^-g & B \ar@<.5ex>[l]^-t \ar[r] & 0
	}
\]
We write $K\colon \ExtS(B)\to \X$ for the forgetful functor which sends a split extension $S=(A,G,k,f,s)$ to the kernel $A$, so that $A=K(S)$.

The existence of a right adjoint $R\colon \X\to \ExtS(B)$ to $K$ was first considered by James R.\ A.\ Gray in his Ph.D.\ thesis \cite{GrayPhD} and further studied in the articles \cite{Bourn-Gray,GrayLie,Gray2012}, amongst others. The description of $R(A)$ for groups in \cite{Gray2012} is precisely the classical wreath product, while $R(A)$ for Lie algebras described in \cite{GM-G, GrayLie} coincides with the wreath product of \cite{PRS}.

We did not know of an explicit description in the literature of $R(A)$ in the case of Hopf algebras, even though by~\cite[Proposition~5.3]{Gray2012} the existence of the functor $R$ was already clear from the fact that the category of cocommutative Hopf algebras over a field~$\K$ is the category of internal groups in the category of cocommutative coalgebras over~$\K$, which is known to be cartesian closed and finitely complete~\cite{Barr-Coalgebras,GM-VdL1}. Since, as explained in Subsection~\ref{Section Construction R}, the wreath product of~\cite{BST} provides us with a construction of the functor~$R$, we now do have an explicit description of the right adjoint of the forgetful functor $K$.

In fact, the existence of a right adjoint $R\colon \X\to \ExtS(B)$ does not come for free, and just a few types of semi-abelian categories are known where it does exist for all $B$. If so\footnote{Our main focus here is on categories where the right adjoint exists for \emph{all} objects $B$. We might, however, consider the condition just for one fixed $B$, which leads to ``local'' versions of results such as Theorem~\ref{Theorem Embedding Split}, applicable in more general settings.}, then~$\X$ is said to be \emph{locally algebraically cartesian closed (LACC)}. The relative strength of the condition is witnessed by the fact that the only examples of (LACC) categories currently known in the literature are: essentially affine categories (which include all additive categories)~\cite{Bourn1991}; internal groups in a cartesian closed category with pullbacks (which include the examples of classical groups, crossed modules and cocommutative Hopf algebras)~\cite{Bourn-Gray}; and internal Lie algebras in an additive cocomplete symmetric closed monoidal category~\cite{GM-G}. On the other hand, almost any other known semi-abelian category is known \emph{not} to be (LACC)---see below (Theorem~\ref{Theorem Algebras}) for a concrete result in the context of algebras over a field.

\subsection{Towards an embedding theorem for split extensions}\label{Section Construction Wreath}
Let us now reason within a chosen semi-abelian category $\X$. Let~$B$ be an object of $\X$. The existence of a right adjoint $R$ for the forgetful functor $K\colon \ExtS(B)\to \X$ implies that each split extension $S$ as in \eqref{Eq Split Extension S} comes equipped with a morphism ${\eta_S\colon S\to RK(S)=R(A)}$, the $S$-component of the adjunction unit $\eta$. As in the case of groups, we call the middle object of the split extension $R(A)$ as in
\[
	\xymatrix{
	0 \ar[r] & A \ar[d]_-{(\eta_S)_A} \ar[r]^-k & G \ar[d]_-{(\eta_S)_G} \ar@<.5ex>[r]^-f & B \ar@<.5ex>[l]^-s \ar@{=}[d]\ar[r] & 0\\
	0 \ar[r] & KR(A) \ar[r]_-\kappa & A\wr B \ar@<.5ex>[r]^-\pi & B \ar@<.5ex>[l]^-\sigma \ar[r] & 0
	}
\]
the \emph{wreath product} of $A$ and $B$ and denote it by $A\wr B$. It turns out that this morphism of extensions is always a monomorphism:

\begin{lemma}\label{Lemma Eta Mono}
	The unit $\eta$ of the adjunction $K\dashv R$ is a monomorphism.
\end{lemma}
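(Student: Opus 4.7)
The plan is to exploit two general facts: (1) monomorphisms in $\ExtS(B)$ are pointwise, and (2) protomodularity makes the kernel and section of a split extension jointly extremal-epimorphic. So if I can show that $(\eta_S)_A$ is mono and that $(\eta_S)_G$ is mono at the level of the two generators $k$ and $s$ of the middle object, I am done.

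First, I would observe that the triangle identity for $K\dashv R$ gives $\epsilon_{K(S)}\circ K(\eta_S)=1_{K(S)}$, so that $K(\eta_S)=(\eta_S)_A$ is a split monomorphism. This settles the kernel component for free and, more usefully, provides a cancellation principle: given two parallel morphisms $\phi,\psi\colon T\to S$ in $\ExtS(B)$ with $\eta_S\comp\phi=\eta_S\comp\psi$, applying $K$ and precomposing with $\epsilon_{K(S)}$ yields $\phi_A=\psi_A$.

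Next I would lift this equality to the middle objects. From $\phi,\psi$ being morphisms of split extensions we have $\phi_G\comp k=l\comp\phi_A=l\comp\psi_A=\psi_G\comp k$ and $\phi_G\comp s=t=\psi_G\comp s$, so $\phi_G$ and $\psi_G$ agree after precomposition with both $k$ and $s$. The key input is then protomodularity of $\X$: as recalled in Subsection~\ref{Section Context}, the pair $(k,s)$ is jointly extremal-epimorphic in the middle object $G$, hence in particular jointly epimorphic. Therefore $\phi_G=\psi_G$, so $\phi=\psi$, proving that $\eta_S$ is a monomorphism in $\ExtS(B)$.

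I do not expect any serious obstacle here; the only subtle point is making sure that ``jointly extremal-epimorphic'' really gives joint epimorphicity, which is standard in any pointed protomodular category with the required limits, and that monomorphisms in $\ExtS(B)$ are detected componentwise, which follows since $\ExtS(B)$ sits as a subcategory of an arrow/diagram category where limits, and hence monomorphisms, are pointwise.
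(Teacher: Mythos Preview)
Your proof is correct and mirrors the paper's: the paper invokes the standard fact that the unit of an adjunction is a monomorphism if and only if the left adjoint is faithful, and then shows $K$ is faithful via protomodularity (joint epimorphicity of the kernel and the section)---your argument is this equivalence unpacked, with the triangle identity supplying one direction explicitly. One small slip to fix: since $\phi,\psi\colon T\to S$, you must precompose $\phi_G,\psi_G$ with the kernel and section of the \emph{domain} $T$, not of $S$; with the roles of $(k,s)$ and $(l,t)$ swapped your equations become $\phi_G\comp l=k\comp\phi_A=k\comp\psi_A=\psi_G\comp l$ and $\phi_G\comp t=s=\psi_G\comp t$, and then joint epimorphicity of $(l,t)$ in the middle object of $T$ gives $\phi_G=\psi_G$ as you intended.
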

\begin{proof}
	It is a well-known categorical fact that the components of the unit of an adjunction are monomorphisms if and only if the left adjoint is a faithful functor. In the case of $K\dashv R$, faithfulness amounts to the condition that whenever we have two morphisms $\alpha$ and $\beta$ of split extensions over $B$ as in
	\[
		\xymatrix{
		0 \ar[r] & A \ar@<-.5ex>[d]_-{\alpha_A} \ar@<.5ex>[d]^-{\beta_A} \ar[r]^-k & G \ar@<-.5ex>[d]_-{\alpha_G} \ar@<.5ex>[d]^-{\beta_G} \ar@<.5ex>[r]^-f & B \ar@<.5ex>[l]^-s \ar@{=}[d]\ar[r] & 0\\
		0 \ar[r] & C \ar[r]_-l & H \ar@<.5ex>[r]^-g & B \ar@<.5ex>[l]^-t \ar[r] & 0\text{,}
		}
	\]
	if $\alpha_A=\beta_A$, then $\alpha_G=\beta_G$. In any semi-abelian category this is indeed true, because protomodularity implies that the morphisms $k$ and $s$ are jointly epimorphic as we recalled in~\ref{Section Context}.
\end{proof}

Thus we proved, essentially without any effort:

\begin{theorem}\label{Theorem Embedding Split}
	In a semi-abelian category $\X$, for any objects $A$ and $B$ there exists a universal\footnote{Universality here means that the property depicted in~\eqref{Eq Diagram Universality} holds, which includes functorial dependence of $R(A)$ on $A$.} split extension
	\[
		R(A)=\bigl({0 \to KR(A) \to A\wr B \leftrightarrows B \to 0}\bigr)
	\]
	over $B$ into which each split extension
	\[
		S=\bigl({0 \to A \to G \leftrightarrows B \to 0}\bigr)
	\]
	embeds if and only if the category $\X$ is locally algebraically cartesian closed, in which case $K\dashv R$ for each chosen object $B$ and the embedding is given by the $S$-component $\eta_S\colon S\to RK(S)$ of the unit of this adjunction.
\end{theorem}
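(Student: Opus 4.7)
The plan is to read the statement as the equivalence between the existence of a universal (functorial) embedding $\eta_S\colon S\to R(K(S))$ for every split extension $S$ over every $B$, and the existence of a right adjoint $R$ to the forgetful functor $K\colon \ExtS(B)\to \X$ for every $B$. Under this reading, both directions become near-formal consequences of Lemma~\ref{Lemma Eta Mono} together with the standard characterisation of adjoint functors via universal arrows.

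For the ``if'' direction, I assume that $\X$ is (LACC). Then for each object $B$ the functor $K$ admits a right adjoint $R$, and I take $\eta_S\colon S\to RK(S)=R(A)$ to be the $S$-component of the unit of this adjunction. The universal property depicted in~\eqref{Eq Diagram Universality} is then literally the universal property of the unit of $K\dashv R$, so it holds by definition; the functoriality of $R$ in the variable $A$ is automatic; and the fact that $\eta_S$ is a monomorphism (and thus an embedding) is precisely what Lemma~\ref{Lemma Eta Mono} provides.

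For the ``only if'' direction, I assume that for every $B$ one has a functorial assignment $A\mapsto R(A)$ together with morphisms $\eta_S\colon S\to R(K(S))$ satisfying the universal property of~\eqref{Eq Diagram Universality}. Unwinding this property, for every split extension $S$ over $B$ and every morphism $\alpha\colon S\to R(C)$ in $\ExtS(B)$ there is a unique $\overline{\alpha}\colon K(S)\to C$ in $\X$ with $R(\overline{\alpha})\circ\eta_S=\alpha$. This says exactly that $\eta_S$ is a universal arrow from $S$ to the functor $R\colon \X\to \ExtS(B)$; the existence of such universal arrows for every $S$ is, by a classical result (Mac Lane), equivalent to $R$ being right adjoint to $K$. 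Hence $K\dashv R$ for each $B$, which is precisely (LACC).

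I do not expect any step to present a genuine obstacle: the substantive work has already been carried out in Lemma~\ref{Lemma Eta Mono} and is repackaged in the dictionary between right adjoints and universal arrows. The theorem is conceptual rather than technical --- its point is to pin down (LACC) as the exact categorical hypothesis equivalent to having a universal Kaluzhnin--Krasner embedding of split extensions, so that in any given semi-abelian setting the existence of such an embedding is reduced to the much-studied question of whether the ambient category is (LACC).
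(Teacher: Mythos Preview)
Your proposal is correct and follows essentially the same approach as the paper's own proof: both directions are reduced to the standard dictionary between right adjoints and universal arrows, with Lemma~\ref{Lemma Eta Mono} supplying the fact that the unit is a monomorphism. The paper's version is merely terser, noting that the universal split extensions ``conspire to a functor $R$'' which the universal property then exhibits as right adjoint to $K$; your invocation of the universal-arrow characterisation of adjoints is exactly this argument spelled out.
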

\begin{proof}
	It was explained in~\ref{Section Construction Wreath} how the existence of a right adjoint functor~$R$ yields a universal split extension. Conversely, all universal split extensions over an object~$B$ taken together conspire to a functor $R$. By the universal properties those split extensions satisfy, $R$ is right adjoint to $K$.
\end{proof}

In other words, a \emph{universal Kaluzhnin--Krasner embedding theorem for split extensions} exists for locally algebraically cartesian closed semi-abelian categories, and only for those. This means that within the semi-abelian context, we can only hope for the validity of a universal Kaluzhnin--Krasner embedding theorem (for all extensions) when the category is (LACC). Now as already mentioned above, such categories are scarce. This becomes especially concrete in the setting of non-associative algebras over a field, by which we mean any type of algebras over a field~$\K$ in the ordinary sense, where we have a $\K$-vector space equipped with a bilinear multiplication satisfying certain identities which need not include associativity. Such a category is called a \emph{variety of non-associative algebras over a field $\K$}. It is indeed known that over an infinite field no such variety can be (LACC), unless it is the variety of $\K$-Lie algebras~\cite{GM-VdL2,GM-VdL3}, from which we deduce an important consequence:

\begin{theorem}\label{Theorem Algebras}
	A variety of non-associative algebras over an infinite field admits a universal Kaluzhnin--Krasner embedding theorem for split extensions if and only if it is the variety of Lie algebras.\noproof
\end{theorem}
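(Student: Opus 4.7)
The plan is to combine Theorem~\ref{Theorem Embedding Split} with the classification of locally algebraically cartesian closed varieties of non-associative algebras proved in~\cite{GM-VdL2,GM-VdL3}, so that essentially no new work is required here.

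First I would verify that any variety $\X$ of non-associative algebras over a field $\K$ lies within the semi-abelian setting of Subsection~\ref{Section Context}. This is standard: the underlying $\K$-vector space structure equips $\X$ with an abelian group operation together with a distinguished zero, so that $\X$ is a variety of $\Omega$-groups in the sense of Higgins. Every such variety is automatically pointed, Barr exact and Bourn protomodular with binary coproducts. Consequently Theorem~\ref{Theorem Embedding Split} is applicable and reduces the statement to the assertion that $\X$ admits a universal Kaluzhnin--Krasner embedding theorem for split extensions if and only if $\X$ is \LACC.

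At that point the argument becomes a citation: by~\cite{GM-VdL2,GM-VdL3}, amongst varieties of non-associative algebras over an infinite field~$\K$, the variety of $\K$-Lie algebras is the only one that is \LACC. Combining this with the biconditional supplied by Theorem~\ref{Theorem Embedding Split} yields the desired characterisation. The only substantive obstacle is the classification result itself, whose proof in~\cite{GM-VdL2,GM-VdL3} proceeds by a detailed analysis of the multilinear identities that must hold in order for the forgetful functor $K\colon \ExtS(B)\to \X$ to admit a right adjoint for every $B$; over an infinite field these identities are shown to force anticommutativity together with the Jacobi identity. Granting that external input, no further computation is needed within this paper.
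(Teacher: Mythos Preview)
Your proposal is correct and matches the paper's approach exactly: the theorem is stated with \verb|\noproof| precisely because it is the immediate combination of Theorem~\ref{Theorem Embedding Split} with the classification from~\cite{GM-VdL2,GM-VdL3}, as the sentence preceding the statement already spells out. Your extra paragraph checking that such varieties are semi-abelian is a harmless elaboration of something the paper takes for granted.
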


That is to say, there is no hope of ever extending the result of~\cite{PRS} to other types of algebras over a field, such as associative algebras, Jordan algebras or Leibniz algebras, in a way which keeps the universality of that embedding fully intact. (One might instead envision \emph{non-universal} Kaluzhnin--Krasner embeddings, however.) Other semi-abelian categories which are excluded because they are known not to be (LACC) are the categories of (commutative or non-commutative) loops, Heyting semilattices, digroups \cite[Examples 4.10]{acc}; and the categories of associative rings (commutative or not, boolean or not)~\cite{Gray2012}.

Leaving this potential obstacle aside, in the remainder of this article we focus on the positive side of Theorem~\ref{Theorem Embedding Split}, extending the general universal 	Kaluzhnin--Krasner embedding theorem from split extensions to arbitrary extensions.

\section{Arbitrary extensions: crude embedding}
Again fixing a semi-abelian category $\X$, we write $\Ext(B)$ for the category of extensions over $B$ in $\X$: a morphism $\phi=(\phi_A,\phi_G)$ from
\begin{equation}\label{Eq Extension E}
	E=(A,G,k,f)=\bigl(\xymatrix
	{0 \ar[r] & A \ar[r]^-k & G \ar[r]^-f & B \ar[r] & 0}\bigr)
\end{equation}
to $F=(C,H,l,g)$ satisfies $\phi_G\circ k=l\circ \phi_A$ and $g\circ \phi_G=f$.
\[
	\xymatrix{
	0 \ar[r] & A \ar[d]_-{\phi_A} \ar[r]^-k & G \ar[d]_-{\phi_G} \ar[r]^-f & B  \ar@{=}[d]\ar[r] & 0\\
	0 \ar[r] & C \ar[r]_-l & H \ar[r]_-g & B \ar[r] & 0
	}
\]
We write $U\colon \Ext(B)\to \X$ for the forgetful functor which sends an extension $E=(A,G,k,f)$ to the kernel $A$, so that $A=U(E)$.

\begin{remark}\label{Remark U Faithful}
	Note that, unlike the forgetful functor $K$, the functor $U$ is not faithful. A counterexample in the category of abelian groups is given by the morphisms of extensions in the diagram
	\[
		\xymatrix{
		0 \ar[r] & \Z \ar@{=}[d] \ar[r]^-{(1_\Z,0)} & \Z\times \Z \ar@<-.5ex>[d]_-{1_{\Z\times \Z}} \ar@<.5ex>[d]^-{\beta} \ar[r]^-{\pi_2} & \Z  \ar@{=}[d]\ar[r] & 0\\
		0 \ar[r] & \Z \ar[r]_-{(1_\Z,0)} & \Z\times \Z \ar[r]_-{\pi_2} & \Z \ar[r] & 0\text{,}
		}
	\]
	where $\beta(m,n)=(m+n,n)$.
\end{remark}

We first work towards a crude version of the Kaluzhnin--Krasner embedding theorem for arbitrary extensions, based on a simple reduction from the non-split to the split case. This depends on the adjunction which exists between extensions and split extensions over $B$.

\subsection{The split extension universally induced by an extension}\label{Subsec Split Extension As Algebra}
The forgetful functor $P\colon \ExtS(B)\to \Ext(B)$ which sends a split extension $(A,G,k,f,s)$ to the extension $(A,G,k,f)$ has a left adjoint $L\colon \Ext(B)\to \ExtS(B)$. It suffices to see that there is a natural bijection between the two types of situations in Figure~\ref{Figure P |- L}. The $E$-component $\lambda_E\colon E\to PL(E)$ of the adjunction unit $\lambda$ is induced by the coproduct inclusion ${\iota_1\colon G\to G+B}$.

\begin{figure}
	\[
		\left.\vcenter{\xymatrix{A \ar[d]_-k \ar[r]^-{\phi_H} & C \ar[d]^-l\\
		G \ar[r]^-{\phi_G} \ar[d]_-f & H \ar[d]^-g\\
		B \ar@{=}[r] & B}}\middle|
		\vcenter{\xymatrix{KL(E) \ar[d]_-{\ker\langle f, 1_B\rangle } \ar[r]^-{\widetilde{\phi}_H} & C \ar[d]^-l\\
		G+B \ar[r]^-{\langle\phi_G,t\rangle} \ar@<.5ex>[d]^-{\langle f, 1_B\rangle} & H \ar@<.5ex>[d]^-g\\
		B \ar@<.5ex>[u]^-{\iota_2}\ar@{=}[r] & B \ar@<.5ex>[u]^-t}}\right.
	\]
	\caption{Left, a morphism of extensions ${\phi\colon E\to P(S)}$; right, the corresponding morphism of split extensions $\widetilde{\phi}\colon{L(E)\to S}$.}\label{Figure P |- L}
\end{figure}

Note that a morphism of extensions $\phi\colon PL(E)\to E$ such that $\phi\comp \lambda_E=1_E$ is completely determined by the choice of a splitting $s\colon B\to G$ of the morphism~$f$: in the diagram
\[
	\xymatrix{
	0 \ar[r] & KL(E) \ar@{-->}[d] \ar[r] & G+B \ar@{-->}[d]_-{\langle?,s\rangle} \ar[r]^-{\langle f,1_B\rangle} & B  \ar@{=}[d]\ar[r] & 0\\
	0 \ar[r] & A \ar[r]_-k & G \ar[r]_-f & B \ar[r] & 0\text{,}
	}
\]
the condition $\phi\comp \lambda_E=1_E$ forces $?=1_G$. This means that a split extension is the same thing as an algebra for the pointed endofunctor
\[
	(PL\colon \Ext(B)\to \Ext(B),\; \lambda\colon 1_{\Ext(B)}\To PL)
\]
and the category of such algebras is isomorphic to $\ExtS(B)$.

\subsection{Towards a crude embedding theorem for extensions}
Adjunctions compose, and this provides us with a crude embedding theorem for extensions. We do indeed have that the composite $KL=UPL\colon \Ext(B)\to \X$ has a right adjoint $W\coloneq PR$, as in the diagram
\[
	\xymatrix@C=4em{\Ext(B) \ar@{<-}`d[r]`[rr]_-{W=PR}[rr] \ar@<1ex>[r]^-{L} \ar@{}[r]|-{\bot} & \ExtS(B) \ar@{}[r]|-{\bot} \ar@<1ex>[l]^-{P} \ar@<1ex>[r]^-{K=UP} & \X \ar@{<-}`u[l]`[ll]_-{KL=UPL}[ll] \text{.} \ar@<1ex>[l]^-{R}}
\]
For any object $A$ of $\X$, the extension $W(A)$ is just the wreath product
\[
	\xymatrix{
		0 \ar[r] & UPR(A) \ar[r]^-\kappa & A\wr B \ar[r]^-\pi & B  \ar[r] & 0
	}
\]
again, but now with the section $\sigma$ forgotten. Note that the left adjoint $KL=UPL$ does \emph{not} coincide with the forgetful functor $U$, which might be unexpected in view of the original Kaluzhnin--Krasner embedding theorem. Here the $E$-component $\upsilon_E$ of the unit $\upsilon$ of the adjunction takes the form ${\upsilon_E\colon E\to WUPL(E)}=WKL(E)$. Just as in the case of split extensions (Lemma~\ref{Lemma Eta Mono}), we may prove that this natural transformation is always a monomorphism.

\begin{lemma}
	The unit $\upsilon$ of the adjunction $KL\dashv W$ is a monomorphism.
\end{lemma}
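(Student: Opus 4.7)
The plan is to mimic the proof of Lemma~\ref{Lemma Eta Mono}, invoking the categorical fact that the components of a unit are monomorphisms if and only if the left adjoint is faithful. It therefore suffices to show that $KL=UPL\colon\Ext(B)\to\X$ is a faithful functor. Since the composition of faithful functors is faithful, and $K$ is known to be faithful (this was the essential content of Lemma~\ref{Lemma Eta Mono}), the task reduces to proving that $L\colon\Ext(B)\to\ExtS(B)$ is faithful.

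To show faithfulness of $L$, I would invoke its explicit description from Subsection~\ref{Subsec Split Extension As Algebra} and Figure~\ref{Figure P |- L}. For a morphism $\phi=(\phi_A,\phi_G)\colon E\to F$ of extensions, the middle component of $L(\phi)\colon L(E)\to L(F)$ is uniquely determined by the conditions that it be compatible with the canonical splittings $\iota_2$ and with the unit $\lambda$; concretely, it equals $\langle\iota_1\comp\phi_G,\iota_2\rangle\colon G+B\to H+B$. From $L(\phi)=L(\psi)$, precomposition with $\iota_1\colon G\to G+B$ yields $\iota_1\comp\phi_G=\iota_1\comp\psi_G$. But $\iota_1$ is a split monomorphism, with retraction $[1_G,0]\colon G+B\to G$, so $\phi_G=\psi_G$. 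Then $\phi_A=\psi_A$ follows from $l\comp\phi_A=\phi_G\comp k=\psi_G\comp k=l\comp\psi_A$, using that the kernel $l$ is a monomorphism. Hence $\phi=\psi$.

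I do not expect a serious obstacle: the essential input is that $\iota_1$ is a monomorphism, which holds in any pointed category with binary coproducts and uses strictly less than the semi-abelian hypothesis. An alternative strategy is to write the unit of the composite adjunction as $\upsilon_E=P(\eta_{L(E)})\comp\lambda_E$ and observe that both factors are monomorphisms---the right factor by Lemma~\ref{Lemma Eta Mono} together with the fact that $P$ preserves monomorphisms (since monos in $\ExtS(B)$ and $\Ext(B)$ are both characterised componentwise), and the left factor because its middle component is the monic $\iota_1$---but the faithfulness route feels more uniform with Lemma~\ref{Lemma Eta Mono}.
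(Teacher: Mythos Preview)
Your proof is correct and follows essentially the same route as the paper: reduce to faithfulness of $KL$, use that $K$ is faithful by Lemma~\ref{Lemma Eta Mono}, and then show that $L$ is faithful. The only cosmetic difference is in this last step: the paper observes that the unit $\lambda$ of $L\dashv P$ is a monomorphism (its middle component being the coproduct inclusion $\iota_1$) and invokes the same categorical principle once more, whereas you verify faithfulness of $L$ by hand from the explicit formula $L(\phi)_G=\langle\iota_1\comp\phi_G,\iota_2\rangle$. Both arguments boil down to $\iota_1$ being monic, so there is no genuine divergence; your ``alternative strategy'' is in fact even closer to the paper's wording.
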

\begin{proof}
	We already know that $K$ is faithful by Lemma~\ref{Lemma Eta Mono}. Since the unit $\lambda$ of the adjunction $L\dashv P$ is a monomorphism by its construction as a coproduct inclusion, it follows that $L$ is faithful as well. Therefore the composite $KL$ is faithful, and hence the unit of the adjunction $KL\dashv PR=W$ is a monomorphism.
\end{proof}

We find:

\begin{theorem}\label{Theorem Embedding Crude}
	In a locally algebraically cartesian closed semi-abelian category $\X$, for any objects $X$ and $B$ there exists a universal extension
	\[
		W(X)=\bigl({0 \to KR(X) \to X\wr B \to B \to 0}\bigr)
	\]
	over $B$ into which each extension
	\[
		E=\bigl({0 \to A \to G \to B \to 0}\bigr)
	\]
	such that $X=KL(E)$ embeds. For a given extension $E$, the embedding is given by the $E$-component ${\upsilon_E\colon E\to WKL(E)}$ of the unit of the adjunction $KL\dashv W$.\noproof
\end{theorem}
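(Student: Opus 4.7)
The plan is simply to compose the two adjunctions already at our disposal. Since $\X$ is locally algebraically cartesian closed, Theorem~\ref{Theorem Embedding Split} supplies a right adjoint $R\colon\X\to\ExtS(B)$ to $K$. Combined with the adjunction $L\dashv P$ of Subsection~\ref{Subsec Split Extension As Algebra}, composition then gives $KL\dashv PR=W$ between $\Ext(B)$ and $\X$, as already displayed in the diagram preceding the statement.

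For any extension $E$ with $KL(E)=X$, the unit component $\upsilon_E\colon E\to WKL(E)=W(X)$ then enjoys the universal property of a unit of an adjunction: every morphism of extensions $\alpha\colon E\to W(Y)$ factors uniquely as $W(\overline{\alpha})\circ\upsilon_E$ for some $\overline{\alpha}\colon X\to Y$ in $\X$. That $\upsilon_E$ is moreover a monomorphism---so that we really have an \emph{embedding}---is exactly the content of the lemma preceding the theorem. To identify the shape of $W(X)$, it suffices to note that, since $W=PR$, applying $P$ to the split extension $R(X)=(0\to KR(X)\to X\wr B\leftrightarrows B\to 0)$ constructed in Subsection~\ref{Section Construction Wreath} merely forgets the section $\sigma$, leaving the displayed extension $0\to KR(X)\to X\wr B\to B\to 0$.

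There is no real obstacle here: every ingredient has already been assembled, so the proof reduces to citing composition of adjunctions and the preceding lemma. The one subtlety worth flagging---and the reason the theorem is called ``crude''---is that the left adjoint is $KL$, not the forgetful functor $U$ (compare Remark~\ref{Remark U Faithful}). Consequently $W(X)$ is universal among extensions $E$ with prescribed $KL(E)$, rather than with prescribed kernel $U(E)=A$. This mismatch between $KL(E)$ and the kernel $A$ of $E$ is what will need to be tamed in the sections to follow, in order to recover an embedding theorem in the spirit of the classical Kaluzhnin--Krasner result.
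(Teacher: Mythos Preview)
Your proposal is correct and follows exactly the paper's own approach: the theorem is stated there with \verb|\noproof| because the preceding discussion already assembles everything---composition of the adjunctions $L\dashv P$ and $K\dashv R$ to obtain $KL\dashv W=PR$, identification of $W(X)$ as $PR(X)$, and the lemma showing that the unit $\upsilon$ is a monomorphism. Your added remark on the mismatch between $KL(E)$ and $U(E)$ is also precisely the point the paper makes immediately after the theorem.
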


Our aim is now to deduce from this result an embedding theorem which is closer to the original one for groups: it will, for instance, take into account the set-theoretical splittings an extension may have. For each extension this involves the construction of a non-canonical map, one which cannot be deduced from the adjointness coming from local algebraic cartesian closedness.

\section{Embedding into the wreath product \texorpdfstring{$A\wr B$}{AwrB}}
We would like to be able to embed an extension $E$ from $A=U(E)$ to $B$ into the wreath product $W(A)$ rather than into $WKL(E)$. In other words, we require the existence of a monomorphism $\phi\colon E\to WU(E)$ for each $E$. Let us analyze this situation in detail.

\subsection{On the existence of \texorpdfstring{$\phi\colon E\to W(A)$}{phi}}
Assuming that a morphism of extensions $\phi\colon E\to W(A)$ does indeed exist, by universality of $WKL(E)$ we obtain a unique morphism $\chi\colon KL(E)\to U(E)$ in $\X$ as in
\[
	\xymatrix{E \ar[r]^-{\upsilon_E} \ar[rd]_-{\forall\phi} & WKL(E) \ar@{-->}[d]^-{W(\chi)} & KL(E)\ar@{-->}[d]^-{\exists !\chi}\\
	& WU(E) & U(E)\text{.}}
\]

Theorem~\ref{Theorem Embedding Split} implies that such a $\phi$ exists when $E=P(S)$ is a split extension: then we may take $\phi=P(\eta_S)$. In this case, the induced morphism $\chi$ is $U$ applied to the $(PL,\lambda)$-algebra structure of $P(S)$---see \ref{Subsec Split Extension As Algebra}---which is $P$ of the counit ${\epsilon_{S}\colon LP(S)\to S}$ of $L\dashv P$ at $S$. Indeed, the composite $WK(\epsilon_S)\circ \upsilon_E$ is equal to~$P(\eta_S)$---since \(\upsilon_E = P\bigl(\eta_{L(E)}\bigr) \circ \lambda_E\), by naturality of $\eta$ and by the triangular identity for $L\dashv P$ as in the commutative diagram
\[
	\xymatrix@=4em{ P(S) \ar@{=}[d] \ar[r]^-{\lambda_{P(S)}} & PLP(S) \ar[d]_-{P(\epsilon_S)} \ar[r]^-{P(\eta_{LP(S)})} & PRKLP(S)\ar[r]^-{WK(\epsilon_S)} \ar[d]^-{PRK(\epsilon_S)} & WK(S) \ar@{=}[d] \\
	E \ar@{=}[r] & P(S) \ar[r]_-{P(\eta_S)} & PRK(S) \ar@{=}[r] & WU(E)\text{.}}
\]
Conversely, if $\chi\colon KL(E)\to U(E)$ happens to be induced by a morphism of extensions ${PL(E)\to E}$, then $E$ carries a $PL$-algebra structure---an arrow \({PL(E) \to E}\), which \emph{a priori} need not be compatible with the unit $\lambda$, but is enough to imply that~$E$ was a split extension in the first place.

This means that we cannot hope that $\chi$ is $U(\underline{\phi})$ for some morphism of extensions $\underline{\phi}\colon PL(E)\to E$. As a consequence, its existence (as a morphism of $\X$ which does not underlie an extension) must follow from a construction outside of the realm of the adjoint functors which we have been considering so far. The non-canonicity of these maps forces us to work on a case-by-case basis. This means understanding the structure of the kernel $KL(E)$ in
\[
	PL(E)=\bigl(\xymatrix
	{0 \ar[r] & KL(E) \ar[r] & G+B \ar[r]^-{\langle f,1_B\rangle} & B \ar[r] & 0}\bigr)
\]
for any given extension
\begin{equation*}\tag{\ref{Eq Extension E}}
	E=\bigl(\xymatrix
	{0 \ar[r] & A \ar[r]^-k & G \ar[r]^-f & B \ar[r] & 0}\bigr)
\end{equation*}
in some concrete (LACC) semi-abelian category. This becomes feasible when the objects in the category have underlying sets---for instance, when we work in a semi-abelian variety of algebras. In our examples, the morphism $\phi$ will then be induced by a set-theoretical splitting $s$ of $f$, which may satisfy further properties such as linearity in the case of Lie algebras. In any variety of algebras, when ${\phi\colon E\to WU(E)}$ is a monomorphism as desired, such a section $s$ of~$f$ can only be compatible with the section~$\sigma$ of the wreath product split extension (i.e., we can only have \(\phi_G \circ s = \sigma\)) if $s$ is a morphism, so that $E$ is a split extension.

In what follows, we shall explain in detail how, in the cases of groups and Lie algebras, this approach leads to the known results. As we shall see, this is far from trivial. We make this effort with the aim of sketching a procedure which can in principle be mimicked in other settings to obtain new results.

\subsection{The case of groups}
In the category of groups, let us consider an extension~$E$ as in~\eqref{Eq Extension E} above, in order to describe the structure of the induced group $KL(E)$, which will provide us with a group monomorphism $\phi\colon E\to WU(E)$.

\begin{lemma}
	The kernel $\Ker (\langle f,1_B \rangle)$ of the induced arrow $\langle f,1_B \rangle \colon G+B \to B$ admits the presentation $P = \langle S\mid R \rangle$ with $S = G \times B$ and
	\[
		R = \{(1,b) = 1 \mid b \in B\} \cup \{(g,b)(g',bf(g)) = (gg',b) \mid \text{$g$, $g' \in G$, $b \in B$}\}\text{.}
	\]
	The idea is to see an element $(g,b)$ of $S$ as the word $bg(bf(g))^{-1}$ in $G+B$.
\end{lemma}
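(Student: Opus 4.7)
My plan is to set up an explicit inverse pair between $P$ and $K \coloneq \Ker(\langle f,1_B\rangle)$, using only the universal property of the free product $G+B$.

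First I would verify that the two families of relations in $R$ hold in $K$ when $(g,b)$ is interpreted as the word $\psi(g,b)\coloneq bg(bf(g))^{-1}\in G+B$. The first is immediate, and the second unfolds as
\[
	bg(bf(g))^{-1}\cdot bf(g)\,g'\,(bf(g)f(g'))^{-1}=bgg'(bf(gg'))^{-1}\text{,}
\]
so $\psi$ extends uniquely to a group homomorphism $\psi\colon P\to K\subseteq G+B$. All generators of $\psi$ clearly lie in the kernel, so $\psi$ does factor through $K$.

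Next I would equip $P$ with a left $B$-action determined on generators by $b_0\cdot (g,b)=(g,b_0b)$: compatibility with the two relations is straightforward by the substitution $b\mapsto b_0b$. This lets me form the semi\-direct product $H\coloneq P\rtimes B$. Using the universal property of the coproduct $G+B$, I define $\varphi\colon G+B\to H$ by
\[
	\varphi(g)\coloneq\bigl((g,1),f(g)\bigr)\text{,}\qquad \varphi(b)\coloneq(1_P,b)\text{;}
\]
the fact that $\varphi|_G$ is a homomorphism is exactly the relation in $R$ specialised to $b=1$. By construction, $\varphi$ lies over $B$, so it restricts to a map $\varphi_0\colon K\to P$ between the respective kernels.

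To close the loop, I would build a map $\chi\colon H\to G+B$ in the reverse direction by $\chi(p,b)\coloneq \psi(p)\cdot b$. The equivariance identity $\psi(b_0\cdot(g,b))=b_0\,\psi(g,b)\,b_0^{-1}$ follows from a direct computation in $G+B$, and guarantees that $\chi$ is a homomorphism. A check on generators gives $\chi\comp\varphi=1_{G+B}$, so $\varphi$ is injective, hence so is $\varphi_0$. On the other hand, a computation inside $H$ shows
\[
	\varphi_0\bigl(\psi(g,b)\bigr)=(1_P,b)\cdot\bigl((g,1),f(g)\bigr)\cdot(1_P,bf(g))^{-1}=\bigl((g,b),1\bigr)\text{,}
\]
so $\varphi_0\comp\psi=1_P$. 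Hence $\varphi_0$ is an isomorphism with inverse $\psi$, which gives the claimed presentation of $K$.

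The main obstacle I anticipate is bookkeeping rather than conceptual: keeping track of the non-abelian multiplication in $B$, and making sure the action $b_0\cdot(g,b)=(g,b_0b)$ actually respects the relations (rather than, say, $(g,bb_0)$, which would not). Once the action is correct, the universal property of $G+B$ does all the heavy lifting, and the rest is routine verification on generators.
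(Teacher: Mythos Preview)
Your argument is correct and genuinely different from the paper's. The paper constructs the inverse map $\Ker(\langle f,1_B\rangle)\to P$ by writing each element of $G+B$ in its unique reduced form $b_1g_1\cdots b_ng_n$ and explicitly sending it to a product of generators; the proof then consists of case-by-case verifications that this set-theoretic map and the obvious map $P\to\Ker(\langle f,1_B\rangle)$ are mutual inverses. You instead observe that the relations in $R$ are stable under the substitution $b\mapsto b_0b$, which yields a $B$-action on $P$, and then identify $G+B$ with $P\rtimes B$ via the universal property of the coproduct. This avoids reduced words entirely and reduces everything to checks on generators.

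Your route is cleaner and more in keeping with the categorical spirit of the paper: it makes transparent that the split extension $L(E)$ is precisely the semidirect product associated to the conjugation action of $B$ on $\Ker(\langle f,1_B\rangle)$, with $P$ as an explicit model for that kernel. The paper's approach has the minor advantage of being entirely self-contained (no auxiliary object $H$), but pays for this with heavier bookkeeping around reduced forms. Your anticipated obstacle---getting the action on the correct side, $b_0\cdot(g,b)=(g,b_0b)$---is indeed the only delicate point, and you have it right.
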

\begin{proof}
	Let \(P\) denote a group admitting the presentation of the statement. We will construct an isomorphism between \(P\) and \(\Ker(\langle f,1_B \rangle)\).
	
	First, according to the idea given in the statement of the lemma, let us define \(\varphi \colon P \to G+B\) sending a generator \((g,b)\) of \(P\), where \(g \in G\), \(b \in B\), to the element \(bg(bf(g))^{-1}\) of \(G+B\). Since these elements verify the relations of \(R\), this assignment forms a well-defined group homomorphism from \(P\) to \(G+B\). Moreover, for all \(g \in G\) and all \(b \in B\),
	\[
		\langle f,1_B \rangle \bigl(\varphi(g,b)\bigr) = \langle f,1_B \rangle \bigl(bg(bf(g))^{-1}\bigr) = bf(g)(bf(g))^{-1} = 1
	\]
	so \(\varphi\) corestricts to the kernel of \(\langle f,1_B \rangle\) to give us a morphism \(\phi \colon P \to \Ker(\langle f,1_B \rangle)\).
	
	Conversely, any \(h \in \Ker(\langle f,1_B \rangle) \leq G+B\) can be uniquely written in reduced form \(h = b_1 g_1 \cdots b_n g_n\), i.e.\ with \(b_i \in B \setminus \{1\}\) for \(i \in \{2, \dots , n\}\), \(g_i \in G \setminus \{1\}\) for \(i \in \{1, \dots , n-1\}\), \(b_1 \in B\) and \(g_n \in G\). Then we define \(\psi \colon \Ker(\langle f,1_B \rangle) \to P\) by setting
	\[
		\psi(h) \coloneq (g_1,b_1) \bigl(g_2,b_1 f(g_1) b_2\bigr) \cdots \bigl(g_n,b_1 f(g_1) \cdots b_{n-1} f(g_{n-1}) b_n\bigr)
	\]
	for \(h = b_1 g_1 \cdots b_n g_n \in \Ker(\langle f,1_B \rangle)\) in reduced form. This function is well defined since the reduced form is unique. At this point it is not so clear whether or not it is a group homomorphism, but this is not needed for our purposes. It will actually follow once we prove that $\psi$ is the inverse function of the group homomorphism $\phi$.
	
	So, let us check that \(\phi\) and \(\psi\) are each other's inverse. For an element \(h = b_1 g_1 \cdots b_n g_n\) of \(\Ker(\langle f,1_B \rangle)\) written in reduced form, we compute
	\begin{align*}
		\phi\bigl(\psi(h)\bigr)
		= & \; \phi\bigl((g_1,b_1) \bigl(g_2,b_1 f(g_1) b_2\bigr) \cdots \bigl(g_n,b_1 f(g_1) \cdots b_{n-1} f(g_{n-1}) b_n\bigr)\bigr) \\
		= & \; \bigl(b_1 g_1 (b_1 f(g_1))^{-1}\bigr) \bigl(b_1 f(g_1) b_2 g_2 (b_1 f(g_1) b_2 f(g_2))^{-1}\bigr)\cdots                  \\
		  & \cdot \bigl(b_1 f(g_1) \cdots b_{n-1} f(g_{n-1}) b_n g_n (b_1 f(g_1) \cdots b_{n-1} f(g_{n-1}) b_n f(g_n))^{-1}\bigr)       \\
		= & \; b_1 g_1 \cdots b_n g_n = h
	\end{align*}
	as wanted, the last step using that \(b_1 f(g_1) \cdots b_{n-1} f(g_{n-1}) b_n f(g_n) = 1\) since our element \(h = b_1 g_1 \cdots b_n g_n\) is in the kernel of~\(\langle f,1_B \rangle\). For the other direction, take \(p = (g_1,b_1) \cdots (g_n,b_n) \in P\) written in a minimal way (which is possible since the relations of \(R\) reduce the length of the words). We compute
	\begin{align*}
		\phi(p) & = \bigl(b_1 g_1 (b_1 f(g_1))^{-1}\bigr) \cdots \bigl(b_n g_n (b_n f(g_n))^{-1}\bigr)                                                     \\
		        & = b_1 g_1 \bigl((b_1 f(g_1))^{-1} b_2\bigr) g_2 \cdots g_{n-1} \bigl((b_{n-1} f(g_{n-1}))^{-1} b_n\bigr) g_n (b_n f(g_n))^{-1} 1\text{.}
	\end{align*}
	Then, since we chose a minimal way of writing \(p\), we have two possibilities for the reduced form of \(\phi(p)\). If \(b_n f(g_n) \neq 1\), the rewriting of \(\phi(p)\) above is its reduced form and, by definition of \(\psi\), we have
	\begin{align*}
		\psi\bigl(\phi(p)\bigr) & = (g_1,b_1) \bigl(g_2,b_1 f(g_1) ((b_1 f(g_1))^{-1} b_2)\bigr)\cdots                                                         \\
		                        & \quad\cdot \bigl(g_n,b_1 f(g_1) \cdots  ((b_{n-2} f(g_{n-2}))^{-1} b_{n-1}) f(g_{n-1}) ((b_{n-1} f(g_{n-1}))^{-1} b_n)\bigr) \\
		                        & \quad\quad\cdot \bigl(1,b_1 f(g_1) \cdots  ((b_{n-1} f(g_{n-1}))^{-1} b_n) f(g_n) (b_n f(g_n))^{-1}\bigr)                    \\
		                        & = (g_1,b_1) (g_2,b_2) \cdots (g_n,b_n) (1,1) = p\text{.}
	\end{align*}
	In the other case, the reduced form of \(\phi(p)\) is given by
	\[
		b_1 g_1 \bigl((b_1 f(g_1))^{-1} b_2\bigr) g_2 \cdots \bigl((b_{n-2} f(g_{n-2}))^{-1} b_{n-1}\bigr) g_{n-1} \bigl((b_{n-1} f(g_{n-1}))^{-1} b_n\bigr) g_n
	\]
	and we also have
	\begin{align*}
		\psi\bigl(\phi(p)\bigr) & = (g_1,b_1) \bigl(g_2,b_1 f(g_1) ((b_1 f(g_1))^{-1} b_2)\bigr)\cdots                                                         \\
		                        & \quad \cdot \bigl(g_n,b_1 f(g_1) \cdots ((b_{n-2} f(g_{n-2}))^{-1} b_{n-1}) f(g_{n-1}) ((b_{n-1} f(g_{n-1}))^{-1} b_n)\bigr) \\
		                        & = (g_1,b_1) (g_2,b_2) \cdots (g_n,b_n) = p\text{,}
	\end{align*}
	finishing the proof.
\end{proof}

If now $s\colon B\to G$ is a set-theoretical splitting of $f$, then we may consider the group homomorphism
\[
	\chi\colon KL(E)\to U(E)\colon (g,b)\mapsto s(b)\cdot g\cdot s(b\cdot f(g))^{-1}
\]
which is well defined because $(1,b)$ is sent to $s(b)\cdot 1\cdot s(b\cdot f(1))^{-1}=1$ while for $(g,b)(g',bf(g))$ we have
\begin{align*}
	\chi(g,b) \cdot \chi(g',bf(g)) & = \bigl(s(b) \cdot g \cdot s(b \cdot f(g))^{-1}\bigr) \cdot \bigl(s(bf(g)) \cdot g' \cdot s(bf(g) \cdot f(g'))^{-1}\bigr) \\
	                               & =s(b) \cdot gg' \cdot s(b \cdot f(gg'))^{-1}
\end{align*}
which is the image of $(gg',b) = (g,b)(g',bf(g))$.

The corresponding morphism $\phi\colon E\to WU(E)$ is the composite
\[
	W(\chi)\comp \upsilon_E=W(\chi)\comp P(\eta_{L(E)})\comp \lambda_E\text{.}
\]
Its $G$-component $\phi_G$ sends $g\in G$ to $g\in G+B$ to
\[
	(h_g,f(g))\in  KL(E)\wr B=\Set(B, KL(E)) \rtimes B
\]
where $h_g\colon B\to KL(E)\colon b\mapsto (g,b)$ as explained in the paragraph immediately below Theorem~\ref{Theorem KK}. In accordance with \eqref{Eq R Natural}, this couple $(h_g,f(g))$ in $KL(E)\wr B$ is in turn sent to $(\chi\comp h_g,f(g))\in A\wr B$. Note that $(\chi\comp h_g)(b)=s(b)\cdot g\cdot s(b\cdot f(g))^{-1}$, so that we regain the classical formula~\cite{KKThm} for the Kaluzhnin--Krasner embedding.

\subsection{The case of Lie algebras}\label{Lie case}
Given a field $\K$ and a split extension of $\K$-Lie algebras~$S$ as in~\eqref{Eq Split Extension S}, it is known~\cite{GrayLie} that $KR(A)$ is isomorphic to $\Vect_{\K}(\overline{B}, A)$, where $\overline{B}$ denotes the universal enveloping algebra of~$B$.

Given an extension~$E$ as in~\eqref{Eq Extension E}, the canonical embedding $E\to RKL(E)$ restricted to $A \to KRKL(E)$ sends any $a \in A$ to
\[
	h_a \colon \overline{B} \to KL(E)\colon b_1 \cdots b_r \mapsto b_1(b_2(\cdots (b_r a) \cdots ))\text{.}
\]
Let $s$ be a linear splitting of $f$. Then $h_a$ induces
\[
	h'_a \colon \overline{B} \to A\colon  b_1 \cdots b_r \mapsto s(b_1)(s(b_2)(\cdots (s(b_r) a) \cdots ))\text{.}
\]
To check that this morphism is well defined, some computations need to be made; here we may imitate~\cite[Section~3]{PRS}. Note that choosing a linear section is the same as choosing a basis that complements $A$ in $G$.

Hence we obtain a morphism $A \to UWU(E)=KRU(E) = \Vect_{\K}(\overline{B}, A)$, which in turn induces the morphism of extensions $\phi \colon E \to WU(E)$, whose $G$-component sends $g \in G$ to
\[
	(h'_{g-sf(g)}, f(g) )\in WU(E) \wr B = \Vect_{\K}(\overline{B}, A) \rtimes B\text{.}
\]
Thus we recover the Kaluzhnin--Krasner embedding from~\cite{PRS}.

\subsection{Further examples}
From the above it is clear that, even though a universal Kaluzhnin--Krasner embedding in its standard form does not follow right away from Theorem~\ref{Theorem Embedding Crude}, there is very little hope of establishing such an embedding in contexts where that theorem is not also valid. As a consequence, the category of crossed modules, as well as the examples worked out in~\cite{GM-G}, being (LACC) semi-abelian categories, all satisfy Theorem~\ref{Theorem Embedding Crude}, hence are good candidates for a ``classical'' Kaluzhnin--Krasner embedding. We will, however, end the article with a slightly different result: a universal embedding theorem for \emph{abelian} split extensions which holds in \emph{any} semi-abelian variety of algebras.

\section{The case of abelian actions}
We pick a semi-abelian variety of algebras $\X$. Note that these are exactly the protomodular varieties, which were characterized in~\cite{Borceux-Bourn,Bourn-Janelidze}.
We are now going to explain that the condition \LACC\ is not necessary in such a category, if we want to embed just the \emph{abelian} actions, which correspond to split extensions equipped with a Beck module structure~\cite{Beck}.

Given an object $B$, recall that a \defn{Beck module over $B$} is an extension \eqref{Eq Extension E} that carries an internal abelian group structure in $\Ext(B)$---determined, in particular, by a unit $s\colon 1_B\to f$ and a multiplication $m\colon f\times_Bf\to f$ in the category $\X/_B$ of objects over $B$---which automatically makes this extension \eqref{Eq Extension E} a split extension. Let us write $\Ab(\Ext(B))$ for the category of (split) extensions over~$B$, equipped with an abelian group structure; its morphisms are morphisms of split extensions. This category is abelian, and since $\X$ is a variety of algebras, \cite[Theorem 2.9]{Gray2012} implies that the lifting $\underline{K}\colon{\Ab(\Ext(B))\to \Ab(\X)}$ of the functor $K\colon {\ExtS(B)\to \X}$ to abelian group objects has a right adjoint~$\underline{R}$. From this we deduce:

\begin{theorem}\label{Theorem Abelian Embedding Split}
	In a semi-abelian variety $\X$, for any object $B$ and any abelian group object $A$ there exists a universal Beck module
	\[
		\underline{R}(A)=\bigl({0 \to \underline{K}\underline{R}(A) \to A\mathbin{\underline{\wr}} B \leftrightarrows B \to 0}\bigr)
	\]
	over $B$ into which each abelian split extension of the form
	\[
		S=\bigl({0 \to A \to G \leftrightarrows B \to 0}\bigr)
	\]
	embeds. This embedding is given by the $S$-component $\underline{\eta}_S\colon S\to \underline{R}\underline{K}(S)$ of the unit of the adjunction $\underline{K}\dashv \underline{R}$.\noproof
\end{theorem}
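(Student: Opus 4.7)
The plan is to adapt the short proof of Theorem~\ref{Theorem Embedding Split} to the abelian-group-object setting. The existence and universality of $\underline{R}(A)$ are granted by the application of \cite[Theorem~2.9]{Gray2012} recalled just before the statement: this produces the right adjoint $\underline{R}\colon \Ab(\X)\to \Ab(\Ext(B))$, together with the canonical morphism of Beck modules $\underline{\eta}_S\colon S\to \underline{R}\underline{K}(S)$ for every $S$. The required factorization property---each $\alpha\colon S\to \underline{R}(C)$ admits a unique $\overline{\alpha}\colon A=\underline{K}(S)\to C$ in $\Ab(\X)$ with $\underline{R}(\overline{\alpha})\circ \underline{\eta}_S=\alpha$---is then simply the universal property of the unit of $\underline{K}\dashv\underline{R}$.

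What remains to verify is that $\underline{\eta}_S$ is a genuine embedding, meaning a componentwise monomorphism of split extensions in $\X$. First, I would control the kernel-component $(\underline{\eta}_S)_A\colon A\to \underline{K}\underline{R}(A)$ via the triangular identity $\underline{\epsilon}_{\underline{K}(S)}\circ \underline{K}(\underline{\eta}_S)=1_{\underline{K}(S)}$, which exhibits $\underline{K}(\underline{\eta}_S)=(\underline{\eta}_S)_A$ as a split monomorphism in $\Ab(\X)$, hence a monomorphism in $\X$. Since the $B$-component of $\underline{\eta}_S$ is the identity, the Short Five Lemma in the protomodular category $\X$---which transfers monomorphicity from the outer components of a morphism of extensions to the middle one---then forces $(\underline{\eta}_S)_G\colon G\to A\mathbin{\underline{\wr}} B$ to be a monomorphism in $\X$ as well, completing the embedding.

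No serious obstacle is anticipated: once the adjointness $\underline{K}\dashv\underline{R}$ is granted by the cited result of Gray, the argument is purely formal and mirrors the reasoning behind Lemma~\ref{Lemma Eta Mono} and Theorem~\ref{Theorem Embedding Split}, the only semi-abelian ingredient being protomodularity invoked through the Short Five Lemma. As an alternative route, one may observe directly that $\underline{K}$ is faithful---since parallel morphisms of Beck modules agreeing on their kernels must coincide, because $k$ and $s$ are jointly epimorphic by protomodularity---and then appeal to the standard categorical fact that the unit of an adjunction is monomorphic precisely when its left adjoint is faithful; this yields the same conclusion.
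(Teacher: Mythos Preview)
Your proposal is correct and aligns with the paper's own treatment: the theorem is stated with \verb|\noproof| because the adjunction $\underline{K}\dashv\underline{R}$ is obtained from \cite[Theorem~2.9]{Gray2012} exactly as you describe, and the subsequent Remark in the paper dispatches the monomorphicity of $\underline{\eta}$ by saying that ``the reasoning of Lemma~\ref{Lemma Eta Mono} applies''---which is precisely your alternative route via faithfulness of $\underline{K}$. Your primary route through the triangular identity and the Short Five Lemma for monomorphisms is an equally valid minor variation leading to the same conclusion.
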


Note that we underlined the wreath product symbol to distinguish $A\mathbin{\underline{\wr}} B$ from the ordinary wreath product $A\wr B$.

\begin{remark}
	To show that the unit $\underline{\eta}$ of the adjunction $\underline{K}\dashv \underline{R}$ is a monomorphism, the reasoning of Lemma~\ref{Lemma Eta Mono} applies.
\end{remark}

\begin{remark}
	It is well known that the category $\Ab(\Ext(B))$ is again a variety of algebras, and since it is an abelian category as well, it is a category of modules over a ring~\cite[Exercise~4.F]{Freyd}. The ring $\Lambda$ in question is the endomorphism ring of the free Beck module (over $B$) with a single generator. In particular, write $\Mod_\Pi\simeq\Ab(\Ext(0))$; the morphism $0\to B$ induces a ring map $\Pi\to \Lambda$. The functor $\underline{K}\colon\Mod_\Lambda \to \Mod_\Pi$ becomes restriction of scalars, so that its right adjoint is $A\mapsto \Mod_\Pi(\Lambda,A)$.
\end{remark}

By the analysis made in~\cite{Bourn-Janelidze:Torsors}, the situation simplifies when the category $\X$ satisfies a mild additional condition, much weaker than (LACC), called the \defn{Smith is Huq condition} in~\cite{MFVdL}: then a Beck $B$-module structure on an internal abelian group object $A$ is completely determined by a split extension from $A$ to $B$. So the abelian split extensions are precisely the split extensions with an abelian kernel. For instance, for a field $\K$, any variety of $\K$-algebras satisfies this condition. Further examples of such categories are given in~\cite{acc,Rodelo:Moore}. Note that in this setting, an abelian object is a $\K$-vector space equipped with the trivial multiplication, so that a Beck module over an algebra $B$ is a split extension such as $S$ above where the result of multiplying two elements of~$A$ is always zero.

For instance, in the case of $\K$-Lie algebras, we have $\Pi=\K$ and $\Lambda=\overline{B}$, the universal enveloping algebra of the Lie algebra $B$. Hence $\underline{K}\colon \Mod_{\overline{B}}\to \Vect_\K$ is the forgetful functor, and its right adjoint takes a $\K$-vector space $A$ to the space $\Vect_\K(\overline{B},A)$ with its canonical $\overline{B}$-module structure as in~\ref{Lie case}.

\section*{Acknowledgements}
The authors would like to express their gratitude to the organisers of the Group Theory Seminar at ICMAT and, in particular, to Henrique A.~Mendes da Silva e Souza, for asking a question that led to this research.


\providecommand{\noopsort}[1]{}
\providecommand{\bysame}{\leavevmode\hbox to3em{\hrulefill}\thinspace}
\providecommand{\MR}{\relax\ifhmode\unskip\space\fi MR }
\providecommand{\MRhref}[2]{%
	\href{http://www.ams.org/mathscinet-getitem?mr=#1}{#2}
}
\providecommand{\href}[2]{#2}

\end{document}